\documentclass[12pt,reqno]{amsart}
\usepackage{amsmath}
\usepackage{amsfonts}
\usepackage{amssymb}
\usepackage{amsthm}
\usepackage[alphabetic]{amsrefs} % biblio
\usepackage{algorithm,setspace}
\usepackage{algpseudocode}
\usepackage{booktabs}
\usepackage{mathrsfs}
\usepackage{mdframed}
\usepackage{caption}
\usepackage[most]{tcolorbox}
\usepackage[makeroom]{cancel}
\usepackage{array}
\usepackage[shortlabels]{enumitem}
\usepackage[hidelinks]{hyperref}
\hypersetup{
    colorlinks=true,
    linkcolor=magenta,      % Color for \cref, \ref, \eqref, etc.
    citecolor=magenta,     % Color for citations
    filecolor=magenta,
    urlcolor=magenta,
}
\usepackage[noabbrev,nosort,capitalise]{cleveref}

\usepackage[margin=1in,headsep=30pt]{geometry}
\pdfpagewidth=\paperwidth
\usepackage{xcolor}
\usepackage{color,soul}
\usepackage{tikz}
\usepackage{tikz-cd}
\usepackage{tkz-euclide}
\usepackage[latin1]{inputenc}
\usepackage{fancyhdr}
\usepackage{stmaryrd}
\usetikzlibrary{
  graphs,
  graphs.standard
}
\usepackage{titlesec}

\titleformat{\section}
  {\normalfont\Large\bfseries}{\thesection }{1em}{}
\titleformat{\subsection}
  {\normalfont\large\bfseries}{\thesubsection.}{1em}{}
\titleformat{\subsubsection}
  {\normalfont\normalsize\itshape}{\thesubsubsection.}{1em}{}
\newcolumntype{C}{>{$}c<{$}}

\newmdenv[
  leftline=true,
  rightline=false,
  topline=false,
  bottomline=false,
  linewidth=2pt,
  linecolor=gray!50,
  skipabove=\baselineskip,
  skipbelow=0pt,
  innerleftmargin=10pt,
  innerrightmargin=5pt,
  innertopmargin=5pt,
  innerbottommargin=5pt
]{leftbar}

\newcommand{\arxiv}[1]
{\bgroup\color{blue!50}%
\href{http://arxiv.org/abs/#1}{\texttt{\scriptsize arXiv:#1}}\egroup}

\theoremstyle{plain}
\newtheorem{thm}{Theorem}
\newtheorem{cor}[thm]{Corollary}

\newtheorem{prop}[thm]{Proposition}

\newtheorem{defn}[thm]{Definition}
\newtheorem{eg}{Example}

\newtheorem{rem}[thm]{Remark}

\numberwithin{equation}{section}
\fancypagestyle{Euler}{%
  \fancyhf{} % clear all fields
  
  \fancyhead[LO]{\sl Elliptic Functions}
  \fancyhead[RE]{Shihan Kanungo}
  \fancyhead[LE,RO]{\thepage}
}%
\newtheorem{theorem}{Theorem}[section]
\newtheorem{lemma}{Lemma}

\allowdisplaybreaks

\begin{document}

\newpage
\thispagestyle{empty}

\newpage 
\setlength{\parskip}{5pt}
\thispagestyle{empty}
\setcounter{page}{1}

\begin{center}
    {\LARGE \bf Elliptic Functions and Eisenstein Series} 
    \vskip 5pt
    {\normalsize From Complex Lattices to Modular Forms}
    
    \vspace{0.3in}
    {\large  Shihan Kanungo} 
    
    \vspace{0.05in}
    {\small San Jos\'e State University}
\end{center}
\date{\today}

\vspace{0.2in}
\begin{center}
    {\bf Abstract}
\end{center}
\begin{quote}
    {\footnotesize In this expository paper, we provide an introduction to elliptic functions and Eisenstein series from a classical analytic perspective while highlighting their broader mathematical significance. The exposition assumes familiarity with basic complex analysis, particularly meromorphic functions, contour integration, and power series expansions.}
\end{quote} 

\setcounter{section}{0}

\vspace{5mm}
\section{Introduction}

Elliptic functions occupy a remarkable position at the crossroads of complex analysis, algebra, number theory, and geometry. Historically, they arose from attempts to understand certain definite integrals appearing in the study of arc lengths of ellipses, much as trigonometric functions arise naturally from integrals associated with the circle. Yet the eventual theory developed by nineteenth-century mathematicians revealed something far richer: elliptic functions are not merely analogues of trigonometric functions, but the first substantial examples of genuinely doubly periodic meromorphic functions on the complex plane. Their study led naturally to the theory of lattices, modular forms, elliptic curves, and ultimately to major developments in modern number theory.

The simplest periodic functions in analysis are the trigonometric functions, which repeat their values after translation by a single period.  Elliptic functions generalize this phenomenon by possessing two independent periods. The geometry of these two-dimensional period lattices gives the subject much of its richness. In modern language, elliptic functions may be viewed as meromorphic functions on complex tori; thus the theory naturally connects complex analysis with geometry and topology.

A central role in this theory is played by the Eisenstein series. These are infinite sums attached to lattices which encode subtle information about the geometry of the lattice. Although initially appearing as auxiliary analytic objects, Eisenstein series turn out to be fundamental building blocks in the theory of elliptic functions and modular forms. In particular, the celebrated Weierstrass $\wp$-function can be expressed in terms of Eisenstein series, and the coefficients appearing in its differential equation are themselves lattice invariants built from these series. From a modern perspective, Eisenstein series provide one of the simplest and most explicit examples of modular forms, objects that now permeate large areas of mathematics ranging from arithmetic geometry to mathematical physics.

The development of the subject spans much of nineteenth-century mathematics. The origins lie in the work of mathematicians such as Leonhard Euler, Adrien-Marie Legendre, and above all Niels Henrik Abel and Carl Gustav Jacob Jacobi, who transformed the study of elliptic integrals into the theory of elliptic functions. Abel recognized the deep algebraic structure underlying these functions, while Jacobi introduced theta functions and established many of the remarkable identities characteristic of the theory. Later, Karl Weierstrass developed a systematic analytic framework centered on the $\wp$-function, placing the subject on rigorous foundations and revealing its intrinsic relation to doubly periodic meromorphic functions.

The modern theory of modular forms emerged from these ideas. The transformation properties of Eisenstein series under changes of lattice revealed unexpected symmetries encoded by the modular group. In the twentieth century, these ideas became central to number theory through the work of mathematicians such as Erich Hecke, Andr\'e Weil, and Jean-Pierre Serre. Today, elliptic functions and Eisenstein series appear throughout mathematics, including the theory of elliptic curves, the proof of Fermat's Last Theorem, representation theory, and string theory.

The aim of this article is to provide an introduction to elliptic functions and Eisenstein series from a classical analytic perspective while highlighting their broader mathematical significance. The exposition assumes familiarity with basic complex analysis, particularly meromorphic functions, contour integration, and power series expansions.

We begin by introducing elliptic functions, and we prove structural results about their poles and zeros. Next, we introduce the Weierstrass $\wp$-function and study its convergence, periodicity, and poles. 

The final part of the article develops Eisenstein series in detail. We then show how these series arise naturally in the Laurent expansion of the $\wp$-function and use them to prove a fundamental differential equation for $\wp$. Using this, we prove the addition formula for $\wp$, as well as some other interesting facts. We conclude by discussing the modularity of the Eisenstein series, and how they connect to elliptic curves.

The theory of elliptic functions is often striking for the way seemingly disparate ideas converge within it. Infinite series, algebraic equations, geometric quotients, and analytic continuation all appear naturally and reinforce one another. Eisenstein series, in particular, illustrate how highly symmetric analytic constructions can encode profound arithmetic information. Although the subject originated in concrete computational problems involving integrals, it ultimately became one of the foundational pillars of modern mathematics.
\section{Elliptic Functions}
In real analysis, periodic functions play a key role, particularly in applications like Fourier analysis. The complex-analytic analogues of periodic functions are \textit{elliptic functions}. 

Roughly speaking, an elliptic function is a meromorphic function that is periodic ``in two directions.'' The formal definition is as follows.

\begin{defn}\label{defn: elliptic functions}
    Given $\omega_1,\omega_2\in \mathbb C$ with $\omega_1/\omega_2\not\in \mathbb R$, a meromorphic function $f$ on $\mathbb C$ is called an \textit{elliptic function} if
    \[f(z+\omega_1) = f(z+\omega_2) = f(z)\]
    for all $z\in \mathbb C$.
\end{defn}
More precisely, we let $\Lambda = \{a\omega_1 + b\omega_2: a,b\in \mathbb Z\}$ be the \textit{lattice generated by $\omega_1$, $\omega_2$}. We then say that $f$ is an \textit{elliptic function on $\Lambda$}. 

Due to the periodicity of $f$, it suffices to define $f$ on the parallelogram in $\mathbb C$ with vertices $0$, $\omega_1$, $\omega_2$, and $\omega_1 + \omega_2$. This parallelogram is called the \textit{fundamental parallelogram $P$}.

    \begin{center}
    \includegraphics[width = 0.4\textwidth]{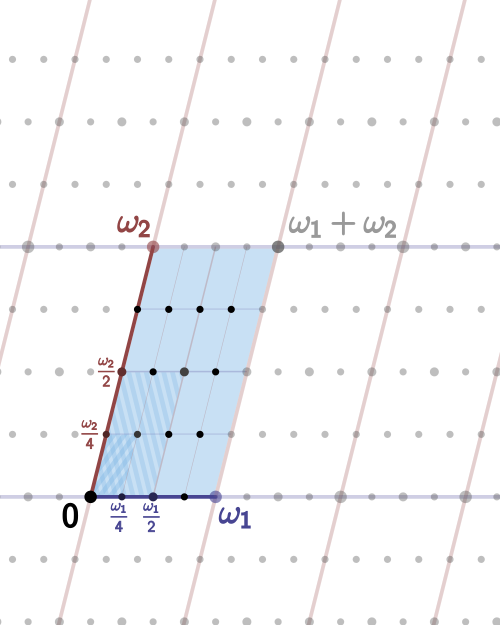}

    \textbf{Figure 1.} A fundamental parallelogram.
\end{center}

\vskip 5pt

Gluing the top and bottom edges and the left and right edges of $P$ together produces a torus, and hence we can think of elliptic functions as ``meromorphic functions on a torus.''

\begin{leftbar}
\begin{rem}
    The condition that $\omega_1/\omega_2\not\in \mathbb R$ is important. Indeed, if $\omega_1/\omega_2\in \mathbb Q$, then we can find a single period $\omega$ of $f$ such that $\omega_1$ and $\omega_2$ are both multiples of $\omega$. If $\omega_1/\omega_2$ is an irrational real number, then there exist linear combinations of $\omega_1$ and $\omega_2$ that are arbitrarily close to zero. By periodicity of $f$, this implies that $f'(z) = 0$ for all $z$, so $f$ is constant. Hence in both cases, we do not get the situation we want.
\end{rem} 
\end{leftbar}

The reader might ask why we specified that an elliptic function is a meromorphic function, rather than being an entire function. This is because of the following result.

\begin{theorem}
    The only entire elliptic functions are constant functions.
\end{theorem}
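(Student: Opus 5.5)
The plan is to reduce the statement to Liouville's theorem. If $f$ is an entire elliptic function on the lattice $\Lambda$, then it is bounded on all of $\mathbb C$, and a bounded entire function is constant. The only real content is the boundedness, which will come from compactness of the closed fundamental parallelogram together with double periodicity.

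\textbf{Step 1: bound $f$ on $P$.} Let $\overline{P}$ be the closed fundamental parallelogram,
\[\overline{P}=\{s\omega_1+t\omega_2 : s,t\in[0,1]\}.\]
It is the image of the compact square $[0,1]^2$ under a continuous map, so it is compact. Because $f$ is entire, it is continuous on $\overline{P}$. Hence there is a constant $M$ with $|f(z)|\le M$ for all $z\in\overline{P}$.

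\textbf{Step 2: every point of $\mathbb C$ is a lattice translate of a point of $\overline{P}$.} Since $\omega_1/\omega_2\notin\mathbb R$, the vectors $\omega_1$ and $\omega_2$ are linearly independent over $\mathbb R$. So any $z\in\mathbb C$ can be written as $z=x\omega_1+y\omega_2$ with $x,y\in\mathbb R$. Put $a=\lfloor x\rfloor$ and $b=\lfloor y\rfloor$. Then
\[z-(a\omega_1+b\omega_2)=(x-a)\omega_1+(y-b)\omega_2\in\overline{P}.\]
By \cref{defn: elliptic functions}, $f(z+\omega_1)=f(z+\omega_2)=f(z)$. Iterating these identities, and applying them at $z-\omega_i$ to handle negative shifts, gives $f(z+\omega)=f(z)$ for every $\omega\in\Lambda$. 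Therefore
\[|f(z)|=|f(z-a\omega_1-b\omega_2)|\le M .\]

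\textbf{Step 3: conclude.} Now $f$ is a bounded entire function, so Liouville's theorem shows that $f$ is constant.

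The only step needing any care is Step 2. It uses the hypothesis $\omega_1/\omega_2\notin\mathbb R$ in an essential way: $\omega_1$ and $\omega_2$ must form a real basis of $\mathbb C\cong\mathbb R^2$ so that the real coordinates $x,y$ exist. Everything else is routine.
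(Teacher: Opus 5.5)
Your proposal is correct and follows the same argument as the paper: bound $f$ on the compact fundamental parallelogram, use periodicity to extend the bound to all of $\mathbb C$, and apply Liouville's theorem. You also make explicit details the paper leaves implicit, namely using the closed parallelogram, reducing modulo $\Lambda$ via real coordinates and floors (where $\omega_1/\omega_2\notin\mathbb R$ is used), and handling negative lattice shifts.
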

\begin{proof}
    Let $f$ be an entire elliptic function. Since the fundamental parallelogram $P$ is compact, $f(P)$ is bounded. Since every point in $\mathbb C$ can be translated to $P$ using periodicity, it follows that $f(\mathbb C) = f(P)$ is bounded. But the only bounded entire functions are constant, so we conclude.
\end{proof}

So any interesting function must have poles. The next simplest possibility is that there is exactly one simple pole per fundamental parallelogram, but unfortunately this too is impossible. This will be a consequence of the following result.

\begin{prop}
    The sum of the residues in a fundamental parallelogram of an elliptic function $f$ is zero.
\end{prop}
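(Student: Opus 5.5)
The plan is to compute the sum of residues by the residue theorem, integrating $f$ around the boundary of a fundamental parallelogram, and to show the integral vanishes because opposite edges cancel by periodicity.

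\textbf{Choosing the contour.} First I deal with poles that might lie on the boundary of $P$. Since $f$ is meromorphic, its poles form a discrete set, so only finitely many meet any compact set. I therefore replace $P$ by a translate $P_\alpha = \alpha + P$, with vertices $\alpha$, $\alpha+\omega_1$, $\alpha+\omega_1+\omega_2$, $\alpha+\omega_2$, choosing $\alpha$ so that no pole of $f$ lies on $\partial P_\alpha$. This is possible because only finitely many poles lie near $P$, and each pole rules out only a null set of shifts $\alpha$.

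Periodicity shows this replacement is harmless. Every point of $\mathbb C$ is congruent modulo $\Lambda$ to exactly one point of a half-open fundamental parallelogram. Residues are invariant under translation by lattice vectors, since $f(z+\omega)=f(z)$ gives $\operatorname{Res}_{z_0+\omega} f = \operatorname{Res}_{z_0} f$. Hence the sum of residues over $P_\alpha$ equals the sum over $P$, where $P$ is understood as a set of representatives of poles modulo $\Lambda$.

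\textbf{Computing the integral.} By the residue theorem,
\[
2\pi i \sum_{z_0\in P_\alpha} \operatorname{Res}_{z_0} f = \oint_{\partial P_\alpha} f(z)\,dz .
\]
I split the boundary into its four edges. The edge from $\alpha$ to $\alpha+\omega_1$ is traversed forward. The opposite edge, from $\alpha+\omega_1+\omega_2$ to $\alpha+\omega_2$, is traversed backward. Substituting $z = w + \omega_2$ and using $f(w+\omega_2)=f(w)$, the second integral is exactly the negative of the first. Similarly, the edges from $\alpha+\omega_1$ to $\alpha+\omega_1+\omega_2$ and from $\alpha+\omega_2$ to $\alpha$ cancel via $f(w+\omega_1)=f(w)$. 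So the contour integral is $0$, and the sum of residues is $0$.

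\textbf{Main obstacle.} The only delicate point is the bookkeeping in the first step: justifying the shift to avoid poles on the boundary, and making precise what ``residues in a fundamental parallelogram'' means. The latter is best handled by counting poles modulo $\Lambda$ via a half-open parallelogram. The cancellation itself is routine.
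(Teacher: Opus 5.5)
Your proposal is correct and follows the paper's proof: apply the residue theorem on the boundary of a fundamental parallelogram and cancel opposite edges using $f(z+\omega_1)=f(z+\omega_2)=f(z)$. Translating to $P_\alpha=\alpha+P$ to keep poles off the contour, and using lattice-invariance of residues, supplies a detail that the paper's proof silently assumes.
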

\begin{proof}
    Let $\Gamma$ be the boundary of a fundamental parallelogram. Then by the Residue theorem,
    \[\frac{1}{2\pi i} \int_\Gamma f(z)\, dz\]
    is the sum of the residues inside $\Gamma$. On the other hand, the integrals of $f(z)$ along the top and bottom sides of $\Gamma$ cancel. Indeed, periodicity implies that the values are the same, and we are integrating in opposite directions. Similarly, the integrals of $f$ over the left and right sides cancel, so the whole integral equals zero. This completes the proof. 
\end{proof}

\begin{cor}
    There are no elliptic functions with exactly one pole (counted with multiplicity) per fundamental parallelogram.
\end{cor}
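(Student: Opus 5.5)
Suppose, for contradiction, that $f$ is an elliptic function on $\Lambda$ with exactly one pole, counted with multiplicity, in a fundamental parallelogram. Then that pole, say at $z_0$, must be simple. In a neighborhood of $z_0$ we can write
\[f(z) = \frac{c}{z-z_0} + g(z),\]
where $g$ is holomorphic near $z_0$ and $c \neq 0$, since a genuine simple pole has nonzero residue. Hence the sum of the residues of $f$ in the parallelogram is exactly $c \neq 0$. This contradicts the preceding proposition, which says this sum is zero.

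The main obstacle is a technical point: the pole might lie on the boundary of the fundamental parallelogram $P$. In that case the contour $\Gamma$ in the proof of the proposition passes through a pole, and the residue theorem does not apply directly. We handle this by translating. Replace $P$ by $P + \alpha$ for a suitable $\alpha \in \mathbb C$, chosen so that no pole of $f$ lies on the boundary of $P+\alpha$. Such an $\alpha$ exists because the poles of $f$ form a discrete set (a finite set modulo $\Lambda$). The translated parallelogram still contains exactly one representative of each class in $\mathbb C/\Lambda$. So it still contains exactly one pole of $f$, counted with multiplicity, and that pole is simple.

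The proof of the proposition goes through verbatim for $P+\alpha$. The integrals over opposite sides still cancel by periodicity, so the residues in $P+\alpha$ sum to zero. On the other hand, that sum is the nonzero residue $c$ of the single simple pole. This contradiction proves the corollary.

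As a remark to accompany the statement: the corollary shows that any nonconstant elliptic function has at least two poles, counted with multiplicity, per parallelogram. This motivates constructing the Weierstrass $\wp$-function with a double pole.
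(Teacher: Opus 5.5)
Your proof is correct and is essentially the paper's argument: a lone simple pole has nonzero residue, which contradicts the proposition that the residues in a fundamental parallelogram sum to zero. Translating the parallelogram so that no pole lies on its boundary is a detail the paper leaves implicit, and you handle it correctly.
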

\begin{proof}
    If there is only one simple pole inside $\Gamma$, then it has a nonzero redsidue, so the sum of the residues cannot be zero.
\end{proof}

We conclude this section with some more results of the same flavor. Notice that all these results include the phrase ``inside a fundamental parallelogram''. This is for good reason, because considering the whole complex plane doesn't give us any extra information.

\begin{theorem}\label{thm: count zeros and poles}
    Inside a fundamental parallogram, an elliptic function $f$ has the same number of zeros as poles (counted with multiplicity).
\end{theorem}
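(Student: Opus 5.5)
The plan is to apply the preceding Proposition, that residues in a fundamental parallelogram sum to zero, to the logarithmic derivative $g = f'/f$ rather than to $f$ itself. We may assume $f$ is not identically zero, since otherwise the statement is vacuous. First I would check that $g$ is again an elliptic function on $\Lambda$. Differentiating $f(z+\omega_j)=f(z)$ gives $f'(z+\omega_j)=f'(z)$, so $f'$ is elliptic. Hence $g$ is a quotient of meromorphic functions with the same periods, so it is meromorphic and doubly periodic.

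Next I would compute the residues of $g$. Near any point $a$, write $f(z)=(z-a)^m h(z)$ with $h$ holomorphic and $h(a)\neq 0$, where $m\in\mathbb Z$. Then
\[\frac{f'(z)}{f(z)} = \frac{m}{z-a} + \frac{h'(z)}{h(z)},\]
and $h'/h$ is holomorphic near $a$. So $g$ has a simple pole with residue $m$ exactly at each zero of order $m>0$, and residue $-m$ at each pole of order $m$. It is holomorphic everywhere else.

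Summing over a fundamental parallelogram, the Proposition gives
\[0 = \sum \operatorname{Res}\, g = (\text{number of zeros}) - (\text{number of poles}),\]
with both counts taken with multiplicity. This yields the theorem.

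The main obstacle is a technical point hidden in the Proposition's proof: zeros or poles of $f$ may lie on the boundary $\Gamma$, where the contour integral is not defined. I would handle this by translating the parallelogram to $\alpha + P$ for a suitable $\alpha\in\mathbb C$. Only finitely many zeros and poles lie in a compact region, since zeros and poles of a nonzero meromorphic function are isolated. So we can choose $\alpha$ with none of them on $\partial(\alpha+P)$. By periodicity, the translated parallelogram contains exactly one representative of each zero and pole modulo $\Lambda$. The counts are therefore the same as for any fundamental parallelogram, and the cancellation argument of the Proposition applies verbatim to $\alpha + P$.
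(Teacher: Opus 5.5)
Your proof is correct and is essentially the paper's argument. The paper invokes the argument principle for $f'/f$ and notes that the boundary integral of the elliptic function $f'/f$ vanishes, which is exactly your local residue computation combined with the residue-sum Proposition; your translation to $\alpha+P$, so that no zeros or poles lie on the boundary, adds rigor the paper skips (one small wording point: $f\equiv 0$ must be excluded by hypothesis rather than called a vacuous case, since it has infinitely many zeros).
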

\begin{proof}
To prove this, we use the argument principle:
\begin{prop}[Argument Principle]
    Let $\Gamma$ be a contour, and let $f$ be meromorphic on $\Gamma$ and its interior. Let $n_0(f)$ and $n_{\infty}$ be the number of zeros and poles of $f$ inside $\Gamma$, respectively. Then
    \[\frac{1}{2\pi i}\int_\Gamma \frac{f'(z)}{f(z)}\, dz = n_0(f) - n_{\infty}(f).\]
\end{prop}
To apply this result, set $\Gamma$ to be the fundamental parallelogram. Then $\frac{f'(z)}{f(z)}$ is also elliptic, which means that the integral on the left vanishes. Hence $n_0(f) = n_\infty (f)$, proving the claim.
\end{proof}

\begin{prop}\label{prop: sum of roots}
Let $f$ be an elliptic function. Let $a_1,\dots, a_n$ be the zeros and poles of $f$ inside a fundamental parallelogram, and let $m_1,\dots, m_n$ be the corresponding multiplicities (poles have negative multiplicities). Then
\[\sum_{k=1}^n a_im_i \in \Lambda.\]
\end{prop}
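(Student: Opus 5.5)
The plan is to use the generalized argument principle: for $g$ holomorphic,
\[
\frac{1}{2\pi i}\int_\Gamma g(z)\frac{f'(z)}{f(z)}\,dz=\sum_k m_k\, g(a_k).
\]
We take $g(z)=z$. First, we translate the fundamental parallelogram so that no zero or pole of $f$ lies on its boundary. This is possible because the zeros and poles are discrete. Translation does not change the sum modulo $\Lambda$, since each point in the new parallelogram differs from the corresponding point in the old one by a lattice element. So let $\Gamma$ be the boundary of $P_\alpha=\alpha+P$, with vertices $\alpha$, $\alpha+\omega_1$, $\alpha+\omega_1+\omega_2$, $\alpha+\omega_2$. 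The residue of $zf'/f$ at $a_k$ is $m_ka_k$, so the residue theorem gives
\[
\sum_k m_k a_k=\frac{1}{2\pi i}\int_\Gamma z\frac{f'(z)}{f(z)}\,dz.
\]

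Next we pair opposite sides, using the periodicity of $f'/f$ (as in the proof of \cref{thm: count zeros and poles}). Unlike before, the factor $z$ is not periodic, so the paired integrals do not cancel; instead they differ by a translation.

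For the side from $\alpha$ to $\alpha+\omega_1$ paired with the side from $\alpha+\omega_1+\omega_2$ to $\alpha+\omega_2$, substitute $z\mapsto z+\omega_2$ on the second side. Its contribution becomes $-\int_\alpha^{\alpha+\omega_1}(z+\omega_2)\frac{f'}{f}\,dz$. The pair therefore sums to
\[
-\omega_2\int_\alpha^{\alpha+\omega_1}\frac{f'(z)}{f(z)}\,dz.
\]
Similarly, the other pair of sides sums to
\[
\omega_1\int_\alpha^{\alpha+\omega_2}\frac{f'(z)}{f(z)}\,dz.
\]

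The main step is to show that $\frac{1}{2\pi i}\int_\alpha^{\alpha+\omega_1}\frac{f'}{f}\,dz\in\mathbb Z$. The integrand is the derivative of a branch of $\log f$ along the segment, so the integral equals the change in $\log f$ along it, namely $\log f(\alpha+\omega_1)-\log f(\alpha)$. Since $f(\alpha+\omega_1)=f(\alpha)\neq 0$, this change lies in $2\pi i\mathbb Z$. Equivalently, the image of the segment under $f$ is a closed curve avoiding $0$, and the integral is $2\pi i$ times its winding number about $0$.

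Writing $n_1$ and $n_2$ for these two integers, we get
\[
\sum_k m_ka_k=-n_1\omega_2+n_2\omega_1\in\Lambda.
\]
The care needed lies in two places: choosing $\alpha$ so that the contour avoids zeros and poles, and justifying the winding-number integrality. The latter follows by continuing $\log f$ along the path, which is valid because $f$ is nonvanishing and holomorphic in a neighborhood of the segment.
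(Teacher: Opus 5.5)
Your proof is correct and follows the same approach as the paper: apply the residue theorem to $z f'(z)/f(z)$ on the boundary of a period parallelogram, pair opposite sides so that only $\omega_2\int f'/f$ and $\omega_1\int f'/f$ survive, and use that these integrals are integer multiples of $2\pi i$. You also supply two points the paper leaves implicit: translating the parallelogram so that no zero or pole lies on $\Gamma$, and deducing integrality from $f(\alpha+\omega_1)=f(\alpha)$. Your sign bookkeeping also correctly attaches $\omega_2$ to the integral along the $\omega_1$-side, where the paper's text says ``multiple of $\omega_1$''.
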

\begin{proof}
    As before, we consider an integral around the boundary $\Gamma$ of the fundamental parallelogram. Consider the integral
\[\frac{1}{2\pi i}\int_\Gamma z\frac{f'(z)}{f(z)}\, dz.\]
By the argument principle, the residue of $\frac{f'(z)}{f(z)}$ at $a_i$ is $m_i$. It follows that \[\mathrm{Res}\left(\frac{zf'(z)}{f(z)}; a_i\right)=a_km_k.\] Thus, the Residue theorem implies that \[\frac{1}{2\pi i}\int_\Gamma z\frac{f'(z)}{f(z)}\, dz = \sum_{k=1}^n a_km_k.\]

Now we compute the integral differently. First we evaluate the sum of the integrals along the top and bottom sides. Since
\[(z+\omega_2) \frac{f'(z+\omega_2)}{f(z+\omega_2)}= z\frac{f'(z)}{f(z)}+\omega_2\frac{f'(z)}{f(z)},\]
the $z\frac{f'(z)}{f(z)}$ term cancels, and we are left with
\[\frac{\omega_2}{2\pi i}\int_{0}^{\omega_1} \frac{f'(z)}{f(z)}\, dz.\tag{$1$}\]
The anti-derivative of $\frac{f'(z)}{f(z)}$ is $\log f(z)$. So the integral is $\log f(\omega_1) - \log f(0)$. However, we have to be careful of branch cuts. As we go from $0$ to $\omega_1$, we might move to a different branch of $\log$, so the value can be any multiple of $2 \pi i$. Hence $(1)$ can be any multiple of $\omega_1$. Similarly, the left and right sides contribute a multiple of $\omega_2$, so the whole expression can be any element of $\Lambda$. It follows that
\[\sum_{k=1}^n a_k m_k \in \Lambda,\]and we are done.
\end{proof}

\section{The Weierstrass $\wp$-function}
Now that we have seen many properties of elliptic functions, it is time to construct one.

A possible first attempt to construct an elliptic function would be to write down an expression like
\[f(z) = \sum_{\omega\in \Lambda} \frac{1}{(z+\omega)^2}.\]
This function is automatically periodic, because adding any element of $\Lambda$ does not change the set of terms being summed. However, there is one problem: this series doesn't converge. Indeed, we can write 
\[\sum_{\omega\in \Lambda} \frac{1}{|z+\omega|^2} = \sum_{n=0}^\infty \sum_{|a|+|b| = n} \frac{1}{|z + a\omega_1 + b\omega_2|^2}.\tag{$2$}\]
Now 
\[|z+a\omega_1 + b\omega_2| \le |z| + |a|\cdot |\omega_1| + |b| \cdot |\omega_2|.\]
In particular, this is bounded above by $Cn$ for some constant $C>0$. Hence, $(2)$ dominates
\[\sum_{n=0}^\infty \sum_{|a| + |b| = n}\frac{1}{C^2n^2} = \sum_{n=0}^\infty \frac{4}{C^2n}.\]
But this sum diverges, so the original sum diverges as well. This means that the series defining $f(z)$ is not absolutely convergent.

This problem can be fixed in multiple ways. One way would be to replace $\frac{1}{(z+\omega)^2}$ with a different function in $z+\omega$; periodicity still holds, for the same reasons. For example, we could consider
\[\sum_{\omega\in \Lambda} \frac{1}{(z+\omega)^3}.\tag{3}\]
As we shall see soon, this is in fact an important elliptic function.

However, we will take a different approach, which will be to ``force'' the series \[\sum_{\omega\in \Lambda}\frac{1}{(z+\omega)^2}\] to converge, by subtracting a constant from each term. Here is the definition.

\begin{defn}[Weierstrass $\wp$-function]\label{defn: wp}
    The \textit{Weierstrass $\wp$-function} is defined by
        \[\wp(z) = \frac{1}{z^2}+\sum_{\omega \in \Lambda\setminus\{0\}}\left(\frac{1}{(z+\omega)^2}-\frac{1}{\omega^2}\right).\]
\end{defn}
Since we have broken the symmetry in the definition of $\wp(z)$, periodicity now does not come for granted. Fortunately, it still holds.

\begin{theorem}
    The $\wp$-function is an elliptic function, with a double pole at each lattice point $\omega\in \Lambda$, and no poles anywhere else. Furthermore, it is an even function.
\end{theorem}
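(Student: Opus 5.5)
The proof has four parts: convergence of the series, the location and order of the poles, evenness, and periodicity. Periodicity will be obtained by differentiating, since the derivative is manifestly periodic, and then fixing the integration constant using evenness.

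\textbf{Convergence.} Fix a compact set $K\subset\mathbb C$ and $R$ with $|z|\le R$ on $K$. For $|\omega|\ge 2R$,
\[\left|\frac{1}{(z+\omega)^2}-\frac{1}{\omega^2}\right| = \frac{|z|\,|2\omega+z|}{|\omega|^2|z+\omega|^2}\le \frac{R\cdot \tfrac52|\omega|}{|\omega|^2\cdot \tfrac14|\omega|^2} = \frac{10R}{|\omega|^3}.\]
So the claim reduces to the lemma $\sum_{\omega\neq 0}|\omega|^{-3}<\infty$. This is the counterpart of the divergence computation $(2)$, run in the other direction. Since $\omega_1/\omega_2\notin\mathbb R$, the function $(a,b)\mapsto|a\omega_1+b\omega_2|$ is continuous and positive on the compact set $\{|a|+|b|=1\}\subset\mathbb R^2$, so it has a minimum $c>0$. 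By homogeneity, $|a\omega_1+b\omega_2|\ge c(|a|+|b|)=cn$ on the shell $|a|+|b|=n$. That shell contains $4n$ lattice points, so
\[\sum_{\omega\neq0}|\omega|^{-3}\le \sum_{n\ge1}\frac{4n}{c^3n^3}<\infty.\]
I expect this lower bound, and hence this lemma, to be the main obstacle. It is the one place where the hypothesis $\omega_1/\omega_2\notin\mathbb R$ enters.

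\textbf{Poles.} By the Weierstrass M-test, the tail of the series over $|\omega|\ge 2R$ converges uniformly on $K$ to a holomorphic function. Only finitely many terms remain, and each is meromorphic with a double pole exactly at $z=-\omega$. Hence $\wp$ is meromorphic on $\mathbb C$, with double poles exactly at the points of $\Lambda$ (recall $-\Lambda=\Lambda$) and no other poles. Because the convergence is absolute, the terms may be rearranged freely. In particular, replacing $z$ by $-z$ and reindexing $\omega\mapsto-\omega$ gives $\wp(-z)=\wp(z)$, so $\wp$ is even.

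\textbf{Periodicity.} Differentiating termwise, which locally uniform convergence permits, gives
\[\wp'(z)=-2\sum_{\omega\in\Lambda}\frac{1}{(z+\omega)^3}.\]
This is the series $(3)$, which converges absolutely by the lemma and is visibly invariant under $z\mapsto z+\omega_i$. Therefore $\wp(z+\omega_i)-\wp(z)=C_i$ is constant on $\mathbb C\setminus\Lambda$. To find $C_i$, evaluate at $z=-\omega_i/2$, which is not a lattice point and hence not a pole. This gives
\[C_i=\wp(\omega_i/2)-\wp(-\omega_i/2)=0\]
by evenness. Hence $\wp$ is elliptic.
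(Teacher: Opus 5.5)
Your proof is correct and follows essentially the same route as the paper. You obtain convergence and the pole structure by comparison with $\sum_{\omega\ne0}|\omega|^{-3}$, evenness from $-\Lambda=\Lambda$, and periodicity by observing that $\wp'$ is the manifestly periodic series (3), then killing the constant at $z=-\omega_i/2$ via evenness. You also supply details the paper only asserts: the explicit $10R/|\omega|^3$ bound, a proof that $\sum_{\omega\neq0}|\omega|^{-3}<\infty$, and the locally uniform convergence that justifies meromorphy and termwise differentiation.
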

\begin{proof}
    Note that as $\omega\to\infty$, 
    \[\frac{1}{(z+\omega)^2}-\frac{1}{\omega^2} \sim \frac{1}{\omega^3}.\]
    Since a two-dimensional sum of $\frac{1}{\omega^3}$ converges, the series defining $\wp(z)$ occurs unless there is a division by zero. This happens precisely when $z\in \Lambda$, in which case we have a double pole. That $\wp$ is an even function follows from the definition and the fact that $\omega\in \Lambda$ implies $-\omega\in \Lambda$.

    Hence, we just need to prove periodicity. To do this, we observe that
    \[\wp'(z) = -2 \sum_{\omega}\frac{1}{(z+\omega)^3}\]
    (this is why (3) is important!). This function is periodic with periods $\omega_1$ and $\omega_2$, so
    \[\frac{d}{dz}(\wp(z+\omega_1)-\wp(z)) = \frac{d}{dz}(\wp(z+\omega_2)-\wp(z)) = 0.\]
    Hence, for some constants $\alpha$, $\beta$, we have \[\wp(z+\omega_1)-\wp(z) = \alpha\quad \text{and} \quad \wp(z+\omega_2)-\wp(z)=\beta\]
    To determine the constants, observe that since $\wp$ is an even function, \[\wp(\omega_1/2) - \wp(-\omega_1/2) = \wp(\omega_2/2) - \wp(-\omega_2/2) =  0.\]
    Hence $\alpha= \beta = 0$, and we conclude.
\end{proof}
We would like to understand the function $\wp(z)$ a little bit better.

The first step to do that would be to understand its derivative $\wp'(z)$. Immediately, we see that $\wp'(z)$ has a pole of order $3$ at $0$, and no poles anywhere else. Consequently, \cref{thm: count zeros and poles} implies that $\wp'(z)$ has three zeros, counted with multiplicity\footnote{In the fundamental parallelogram, of course. We will henceforth omit this clause when the context is clear.}. What might these zeros be? I claim that they are
\[\frac{\omega_1}{2}, \quad \frac{\omega_2}{2}, \quad \text{and}\quad \frac{\omega_1+\omega_2}{2}.\]
The motivation for these particular numbers is that $\wp(z)$ is an even function. Since $\wp(z)$ is even, $\wp'(z)$ is odd, i.e. $\wp'(z) + \wp'(-z) = 0$. Each of the numbers above satisfies $z - (-z)\in \Lambda$ (and up to periodicity, they are the only non-lattice points to do so). Hence, they all satisfy
\[2\wp'(z) = \wp'(z) +\wp'(-z)=0, \]
so they are zeros of $\wp'(z)$. Since $\wp'(z)$ has only 3 total zeros, these must be the zeros, and each one is a simple zero. 
Let
\[e_1 = \wp\left(\frac{\omega_1}{2}\right), \quad e_2 = \wp\left(\frac{\omega_2}{2}\right), \quad \text{and}\quad e_3 = \wp\left(\frac{\omega_1+\omega_2}{2}\right).\]
\begin{prop}\label{prop: simple roots}
    If $w\in\{e_1,e_2,e_3\}$ then the function $\wp(z)-w$ has a single double zero. Otherwise, $\wp(z) -w$ has two simple zeros.
\end{prop}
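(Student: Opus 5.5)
The plan is to count zeros with \cref{thm: count zeros and poles} and then use the evenness of $\wp$ to decide how those zeros pair up. Fix $w\in\mathbb C$ and consider the elliptic function $g(z)=\wp(z)-w$. Its poles are exactly those of $\wp$: a single double pole at the lattice point in the fundamental parallelogram. By \cref{thm: count zeros and poles}, $g$ has exactly two zeros in the fundamental parallelogram, counted with multiplicity. So either $g$ has one double zero, or it has two distinct simple zeros. The task is to show that the first case happens precisely when $w\in\{e_1,e_2,e_3\}$.

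First I will show that $g$ has a zero of order at least $2$ at some point $z_0$ if and only if $g(z_0)=0$ and $\wp'(z_0)=0$. We showed above that the zeros of $\wp'$ in the fundamental parallelogram are exactly $\frac{\omega_1}{2}$, $\frac{\omega_2}{2}$ and $\frac{\omega_1+\omega_2}{2}$, each simple. Hence a multiple zero of $g$ must lie at one of these half-periods, and then $w=\wp(z_0)$ is one of $e_1,e_2,e_3$. Conversely, suppose $w=e_j$ and let $z_0$ be the corresponding half-period. Then $g(z_0)=0$ and $g'(z_0)=\wp'(z_0)=0$, so $z_0$ is a zero of order at least $2$. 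Since there are only two zeros in total, it is exactly a double zero and $g$ has no other zeros. This settles the first assertion. I would also note that the zero order is exactly $2$, which is consistent with $\wp''(z_0)\neq 0$ because $z_0$ is a simple zero of $\wp'$.

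For $w\notin\{e_1,e_2,e_3\}$, the previous step says $g$ has no multiple zero, so its two zeros are distinct and simple. It is worth also explaining the structure, though this is not needed for the proof. If $z_0$ is a zero, then so is $-z_0$ by evenness, and $-z_0\not\equiv z_0 \pmod{\Lambda}$ because $z_0$ is not a half-period. We must also check that $z_0\notin\Lambda$, since $g$ has a pole there.

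The main subtlety is bookkeeping in the fundamental parallelogram. Zeros should be counted modulo $\Lambda$, and when a zero or pole lies on the boundary of $P$ the contour argument needs a translated parallelogram. I would handle this by remarking that \cref{thm: count zeros and poles} applies equally to any translate $P+c$ chosen so that no zeros or poles lie on its boundary. Then all the counts are naturally counts on the torus $\mathbb C/\Lambda$.
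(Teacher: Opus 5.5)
Your proof is correct and follows the paper's argument. You get two zeros counted with multiplicity from \cref{thm: count zeros and poles}, observe that a double zero occurs exactly when $\wp'$ also vanishes there, and then use the identification of the zeros of $\wp'$ with the half-periods. Your additional remarks fill in details the paper leaves implicit: you use a translate of $P$, which is needed because the pole at $0$ and the half-periods $\omega_1/2$, $\omega_2/2$ lie on $\partial P$, and you check that the double zero at a half-period uses up the entire zero count.
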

\begin{proof}
    Since $\wp(z) - w$ has a single pole which has order 2, it must have two zeros. Hence it has either a double zero or two simple zeros. The first possibility happens if and only if the solution $\wp(z) - w$ satisfies $\wp'(z)=0$. This is equivalent to $w\in \{e_1,e_2,e_3\}$, by the discussion above. 
\end{proof}
Using arguments like this, we can construct many elliptic functions.
\begin{leftbar}
\begin{eg}[An elliptic function with two simple poles]
    Choose any $w\not\in \{e_1,e_2,e_3\}$. Then the elliptic function \[\frac{1}{\wp(z)-w}\] has two poles, both of which are simple.
\end{eg}
\end{leftbar}

\begin{leftbar}
\begin{eg}[$\wp$ for different lattices]\label{ex: wp for lattices}
    Technically, the Weierstrass function $\wp(z)$ depends on the choice of the lattice $\Lambda$, so we should really write $\wp_{\Lambda}(z)$. In practice, we don't do this, since the lattice is clear from context. However, there are interesting relations between $\wp_{\Lambda}(z)$ for different choices of $\Lambda$. We give two examples below.

    \begin{enumerate}[leftmargin=*,label=$(\alph*)$]
        \item For $a\in\mathbb C\setminus \{0\}$, write $a\Lambda=\{a\omega :\omega\in\Lambda\}$. Then we have
\begin{align*}
    \wp_{a\Lambda}(z) &= \frac{1}{z^2} + \sum_{\omega\in \Lambda^*} \frac{1}{(z+a\omega)^2}-\frac{1}{a^2\omega^2}\\
    &=\frac{1}{a^2}\left(\frac{a^2}{z^2} + \sum_{\omega\in \Lambda^*} \frac{1}{(z/a+ \omega)^2}-\frac{1}{\omega^2}\right)\\
    &= \frac{\wp_{\Lambda}(z/a)}{a^2}.
\end{align*}
\item For convenience of notation, let $\wp = \wp_\Lambda$. Consider the function 
\[f(z) = \wp(z)+\wp(z+\omega_1/2) + \wp (z+\omega_1/2) + \wp(z+(\omega_1+\omega_2)/2).\]
Then $f$ is an elliptic function on $\Lambda/2$, with a double pole at each point in $\Lambda/2$, and no poles anywhere else. Furthermore, the $z^{-2}$ coefficient in the Laurent expansion of $f(z)$ is $1$. Hence, $f(z) - \wp_{\Lambda/2}(z)$ has no poles, and hence is constant. To compute this constant, we look at the constant term in the Laurent expansion of $f(z)$. The Laurent series for $\wp(z)$ tells us that $\wp(z)$ has constant term equal to zero. Hence, the value equals $e_1+e_2+e_3$. Since $\wp_{\Lambda/2}(z)$ also has constant term equal to zero, it follows that
\begin{align*}
\wp_{\Lambda/2}(z) =\, &\wp(z)+\wp(z+\tfrac{\omega_1}{2})+ \wp (z+\tfrac{\omega_2}2) + \wp(z+\tfrac 12(\omega_1+\omega_2))\\ &- e_1 -e_2 - e_3.
\end{align*}
% As we shall see below, $e_1+e_2 +e_3 = 0$
    \end{enumerate}
\end{eg}
\end{leftbar}

\medskip
The example above are manifestations of a much greater phenomenon, which is the following remarkable result.
\begin{theorem}\label{thm: classification}
    Every elliptic function can be expressed as a rational function in $\wp(z)$ and its derivative $\wp'(z)$. 
\end{theorem}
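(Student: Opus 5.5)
The plan is to reduce to even functions and then handle the even case by killing poles with factors of the form $\wp(z)-\wp(a)$. Given an elliptic function $f$ on $\Lambda$, write
\[f(z)=\frac{f(z)+f(-z)}{2}+\frac{f(z)-f(-z)}{2}.\]
The first summand is even and elliptic. The second is odd, so dividing it by the odd elliptic function $\wp'(z)$ gives an even elliptic function. It therefore suffices to prove that every \emph{even} elliptic function $g$ is a rational function of $\wp$ alone. Then $f=g_1+\wp'\,g_2$ with $g_1,g_2$ even, which is the claim.

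For an even elliptic $g$ (not identically zero), I first analyze the zeros and poles in a fundamental parallelogram modulo $\Lambda$. Evenness gives $\operatorname{ord}_a g=\operatorname{ord}_{-a}g$, so non-lattice zeros and poles pair off as $a$ and $-a$. If $a\equiv -a \pmod \Lambda$, that is, $a$ is a half-period, then the order at $a$ is even. Indeed, $g(a+t)=g(-a-t)=g(a-t)$ shows $g$ is even about $a$, so its Taylor or Laurent expansion at $a$ has only even powers. The same argument gives even order at $0$.

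Now consider the rational expression
\[h(z)=\prod_{a}\bigl(\wp(z)-\wp(a)\bigr)^{m_a}.\]
Here $a$ runs over a set of representatives of the non-lattice zeros and poles of $g$ modulo $a\sim -a$, and $m_a=\operatorname{ord}_a g$ for generic $a$, while $m_a=\tfrac12\operatorname{ord}_a g$ for half-periods. By the discussion before \cref{prop: simple roots} and by \cref{prop: simple roots} itself, $\wp(z)-\wp(a)$ has simple zeros at $\pm a$ when $a$ is not a half-period, and a double zero at $a$ when it is. Its only pole is a double pole at $0$. Hence $g/h$ is elliptic and has no zeros or poles off $\Lambda$. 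Since it can then have a pole only at $0$, and \cref{thm: count zeros and poles} forces the number of zeros to equal the number of poles, it has neither. So $g/h$ is an entire elliptic function and thus constant by the earlier theorem. This gives $g=c\,h$, a rational function of $\wp$.

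The main obstacle is the multiplicity bookkeeping. The step I expect to need care is the verification that the order at a half-period is even and that it matches the double zero of $\wp(z)-\wp(a)$ exactly. For that I would use the parity expansion at $a$ described above together with \cref{prop: simple roots}. The behavior at $0$ then takes care of itself through the zero/pole count rather than needing separate tracking.
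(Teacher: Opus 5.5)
Your proposal is correct and follows essentially the same route as the paper: you split $f$ into even and odd parts, divide the odd part by $\wp'$, and then show that an even elliptic function is a constant multiple of a product of powers of $\wp(z)-\wp(a)$, by matching zeros and poles and using that entire elliptic functions are constant. Your bookkeeping is more careful than the paper's, since you prove that orders at half-periods are even (the paper tacitly assumes this when it pairs each zero $a$ with $-a$), and you let the zero/pole count handle the point $0$ instead of first multiplying by a power of $\wp$.
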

This theorem implies that $\wp(z)$ is the single ``most important'' elliptic function, and that studying it amounts to studying all elliptic functions.

The result will be a consequence of the following lemma.
\begin{lemma}\label{lem: even classification}
    Every \textit{even} elliptic function can be expressed as a rational function in $\wp(z)$.
\end{lemma}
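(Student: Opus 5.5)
The plan is to match zeros and poles. Given a nonconstant even elliptic function $f$, we build an explicit rational function $R(\wp)$ with exactly the same zeros and poles (with multiplicity) in a fundamental parallelogram. Then $f/R(\wp)$ is an entire elliptic function, hence constant by the theorem that the only entire elliptic functions are constants. The candidate is
\[R(\wp)=\prod_{a}\bigl(\wp(z)-\wp(a)\bigr)^{m_a},\]
where $a$ runs over a suitable set of representatives of the zeros and poles of $f$ other than the lattice points, and $m_a$ are suitable exponents. The behaviour at $0$ is then handled automatically, as explained below.

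The first key step is to understand how evenness constrains zeros and poles. If $f(a)=0$ with multiplicity $m$, then $f(-a)=0$ with the same multiplicity, since $f(-z)=f(z)$; the same holds for poles. So zeros and poles not congruent to a half-period (that is, with $a\not\equiv -a \pmod\Lambda$) come in pairs $\{a,-a\}$ modulo $\Lambda$. For the points with $2a\in\Lambda$, namely $0$ and the three half-periods $\tfrac{\omega_1}{2},\tfrac{\omega_2}{2},\tfrac{\omega_1+\omega_2}{2}$, I will show that the order of $f$ there is even. Since $f$ is even and periodic, $g(u)=f(a+u)$ satisfies $g(-u)=f(a-u)=f(-a+u)=f(a+u)=g(u)$, because $-a\equiv a$. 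Thus $g$ is even in $u$, and its Laurent expansion at $u=0$ contains only even powers, so its order is even.

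The second step is to match these orders using the factors $\wp(z)-\wp(a)$. For a non-half-period point $a$ with $a\not\equiv -a$, \cref{prop: simple roots} says that $\wp(z)-\wp(a)$ has simple zeros exactly at $a$ and $-a$; indeed $\wp(a)\notin\{e_1,e_2,e_3\}$, since the value $e_i$ is attained only at its half-period. So for each pair $\{a,-a\}$ I take one factor with exponent equal to the common order $m_a$ of $f$ at $a$. For a half-period $a$, $\wp(z)-e_i$ has a double zero at $a$ and no other zeros, so I use exponent $m_a/2$, which is an integer by the first step. The product $R(\wp)$ then has the same orders as $f$ at every point other than the lattice points.

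The step needing the most care is the lattice points. The only poles of $R(\wp)$ are at $\Lambda$, and likewise $f/R(\wp)$ has zeros and poles at most on $\Lambda$, i.e.\ at a single point per fundamental parallelogram. By \cref{thm: count zeros and poles}, the number of zeros equals the number of poles, so the order there must be $0$. Thus $f/R(\wp)$ is an elliptic function with no zeros or poles, hence constant $c$, and $f=c\,R(\wp)$. I expect the parity argument at the half-periods to be the main point needing care; the bookkeeping at $0$ is absorbed by this counting argument.
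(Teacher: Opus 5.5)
Your proof is correct and is essentially the paper's argument: match the zeros and poles of $f$ off the lattice with a product of factors $\wp(z)-\wp(a)$, then conclude that the quotient is a pole-free elliptic function, hence constant. The differences are minor. You handle the lattice point with \cref{thm: count zeros and poles} at the end instead of first multiplying $f$ by $\wp^m$, and you actually prove that $f$ has even order at the half-periods, which the paper's pairing into $\{a,-a\}$ uses without comment (one small slip: $R(\wp)$ itself has poles off $\Lambda$ whenever some $m_a<0$, but the claim you actually use, that $f/R(\wp)$ has zeros and poles only on $\Lambda$, is right).
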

\begin{proof}
Let $f(z)$ be an even elliptic function, and let
\[f(z) = \sum_{n=-\infty}^\infty a_nz^n\]
be the Laurent expansion of $f$. Then $a_n =0$ for odd $n$. The order of the pole at $0$ is thus even (this order could be negative or zero). Hence, there exists a unique integer $m$ such that $f(z)\wp(z)^m$ has neither a zero nor pole at $z=0$. 

Replacing $f(z)$ with $f(z)\wp(z)^m$, we may assume that $f$ has neither a zero nor pole at $z=0$. Now since $f$ is even, if $a$ is a root of $f$, then so is $-a$. Thus the multiset of roots of $f$ can be partitioned into $\{a,-a\}$. Restricting to the fundamental parallelogram, let $a_1,\dots, a_k, -a_1,\dots, -a_k$ be the zeros of $f$ (technically, we should write $-a_1\pmod{\Lambda}$, but this makes the notation clunky). Hence, by \cref{prop: simple roots}, the function
\[(\wp(z) - \wp(a_1)) \cdots (\wp(z) - \wp(a_k))\]
has the same multiset of zeros as $f$. Furthermore, it has a pole of order $2k$ at $0$.

Similarly, let $b_1,\dots, b_l, -b_1,\dots, b_k$ be the poles of $f$ (observe that $k = \ell$ by \cref{thm: count zeros and poles}). The function 
\[\frac{1}{(\wp(z) - \wp(b_1)) \cdots (\wp(z) - \wp(b_k))}\]
has the same set of poles as $f$, and it has a zero of order $2k$ at $0$. Thus, the function
\[\frac{(\wp(z) - \wp(a_1)) \cdots (\wp(z) - \wp(a_k))}{(\wp(z) - \wp(b_1)) \cdots (\wp(z) - \wp(b_k))}\]
has the same set of roots and zeros as $f$ (since the zero and pole at $z=0$ cancel). It follows that this function must in fact equal a multiple of $f$, and we conclude.
\end{proof}
Proving the original theorem is now straightforward.
\begin{proof}[Proof of \cref{thm: classification}]
    Every elliptic function $f$ can be written as
    \[f = f_e +f_o\]
    for even and odd elliptic functions $f_e$ and $f_o$, respectively. Concretely, we may set
    \[f_e(z) = \tfrac{1}{2}(f(z) + f(-z)) \quad \text{and}\quad f_o(z) = \tfrac{1}{2}(f(z) - f(-z)).\]
    By \cref{lem: even classification}, $f_e$ is a rational function in $\wp(z)$. Furthermore, $\wp'(z)$ is an odd function, so $f_o/\wp'(z)$ is an even elliptic function. Hence it is a rational function in $\wp(z)$ too. Multiplying this by $\wp'(z)$, we conclude.
\end{proof}
Using this theorem we can conclude, for example, that $\wp'(z)^2$ is in fact a rational function in $\wp(z)$. In the next section we will derive the explicit formula.
\begin{leftbar}
\begin{eg}
    We need to a little careful when using this theorem. For example, it seems like the function $e^{\wp(z)}$ must be a rational function of $\wp(z)$. But of course this can't be true, since that would imply that $e^w$ is a rational function in $w$. What's going on here? While $e^{\wp(z)}$ \textit{is} periodic with respect to $\Lambda$, it is \textit{not} a meromorphic function. Indeed, the double pole of $\wp(z)$ at $0$ induces a pole of \textit{infinite} order at $0$. 
\end{eg}
\end{leftbar}

\section{Eisenstein Series}
Like many other functions in complex analysis, there are numerous interesting ways to represent the Weierstrass function. Most notably, we have a Laurent series expansion.
\begin{theorem}
    We have
    \begin{align*}
        \wp(z) = \frac{1}{z^2}+\sum_{k=1}^\infty(2k+1)E_{2k+2}z^{2k},
    \end{align*}
    where
    \[E_k = \sum_{\omega\in \Lambda_{\ne 0}}\frac{1}{\omega^k}\]
    are called the \emph{Eisenstein series}.
\end{theorem}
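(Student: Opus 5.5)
The plan is to expand each summand of the defining series for $\wp$ as a power series in $z$ and then interchange the order of summation. Fix $z$ with $0<|z|<r$, where $r=\min\{|\omega|:\omega\in\Lambda\setminus\{0\}\}$. For each $\omega\ne 0$ we have $|z/\omega|<1$. Differentiating the geometric series $\frac{1}{1-u}=\sum_{n\ge 0}u^n$ gives $\frac{1}{(1-u)^2}=\sum_{n\ge0}(n+1)u^n$, hence
\[\frac{1}{(z+\omega)^2}-\frac{1}{\omega^2}=\frac{1}{\omega^2}\left(\frac{1}{(1+z/\omega)^2}-1\right)=\sum_{n=1}^\infty (n+1)(-1)^n\frac{z^n}{\omega^{n+2}}.\]
Summing over $\omega\in\Lambda\setminus\{0\}$ and formally swapping the two sums gives
\[\wp(z)=\frac{1}{z^2}+\sum_{n=1}^\infty (-1)^n(n+1)E_{n+2}z^n.\]

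Next, the odd-index terms vanish. Since $\omega\mapsto-\omega$ is a bijection of $\Lambda\setminus\{0\}$, we get $E_k=(-1)^kE_k$, so $E_k=0$ for odd $k$. Hence only even $n=2k$ survive, with $(-1)^{2k}=1$ and coefficient $(2k+1)E_{2k+2}$. This matches the claimed formula. One could also note directly that $\wp$ is even, as proved in the previous theorem.

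The main obstacle is justifying the interchange of summation, which requires absolute convergence of the double series $\sum_{\omega}\sum_{n\ge1}(n+1)|z|^n/|\omega|^{n+2}$. I would bound the inner sum as follows. With $q=|z|/r<1$ and $|\omega|\ge r$,
\[\sum_{n\ge1}(n+1)\frac{|z|^n}{|\omega|^{n+2}}\le \frac{|z|}{|\omega|^{3}}\sum_{n\ge1}(n+1)q^{n-1}=\frac{C(z)}{|\omega|^3}.\]
It then suffices that $\sum_{\omega\ne0}|\omega|^{-3}<\infty$. This is the two-dimensional convergence already used in the proof that $\wp$ is elliptic. To make it self-contained, I would note that $|a\omega_1+b\omega_2|\ge c(|a|+|b|)$ for some $c>0$, by compactness of $\{|a|+|b|=1\}$ in $\mathbb R^2$ and $\mathbb R$-linear independence of $\omega_1,\omega_2$. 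Since there are $4n$ lattice points with $|a|+|b|=n$, the sum is bounded by $\sum_n 4n/(c^3n^3)<\infty$. The same estimate shows that each $E_k$ with $k\ge3$ converges absolutely, so the coefficients are well defined. Fubini/Tonelli for absolutely convergent double series then justifies the rearrangement, and the expansion holds on the punctured disc $0<|z|<r$.
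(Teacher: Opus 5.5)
Your proof is correct and follows essentially the same route as the paper: you expand each summand using $\frac{1}{(1-u)^2}=\sum_{n\ge0}(n+1)u^n$, interchange the sums to get coefficients $(n+1)E_{n+2}$, and discard odd indices because $E_k=0$ for odd $k$. The paper differs only cosmetically, reindexing $\omega\mapsto-\omega$ first so no $(-1)^n$ signs appear, and your absolute-convergence estimate on $0<|z|<r$ justifying the interchange is a welcome addition that the paper leaves implicit.
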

\begin{proof}
    We just re-write \cref{defn: wp} as a Laurent series in $z$:
    \begin{align*}
        \wp(z) &= \frac{1}{z^2}+\sum_{\omega\in \Lambda_{\ne 0}} \frac{1}{(z+\omega)^2}-\frac{1}{\omega^2}\\
        &= \frac{1}{z^2}+\sum_{\omega\in \Lambda_{\ne 0}}\frac{1}{(\omega-z)^2}-\frac{1}{\omega^2}\\
        &= \frac{1}{z^2}+\sum_{\omega\in \Lambda_{\ne 0}}\frac{1/\omega^2}{(1-z/\omega)^2}-\frac{1}{\omega^2}\\
        &= \frac{1}{z^2}+\sum_{\omega\in \Lambda_{\ne 0}}\frac{1}{\omega^2}\sum_{k=1}^\infty(k+1) \left(\frac{z}{\omega}\right)^{k }- \frac{1}{\omega^2}\\
        &= \frac{1}{z^2} + \sum_{\omega\in \Lambda_{\ne 0}}\sum_{k=1}^\infty (k+1)\frac{z^k}{\omega^{k+2}}\\
        &= \frac{1}{z^2} + \sum_{k=1}^\infty(k+1)z^k\sum_{\omega\in \Lambda_{\ne 0}} \frac{1}{\omega^{k+2}}\\
        &= \frac{1}{z^2} + \sum_{k=1}^\infty(k+1)E_{k+2}z^k.
    \end{align*}
    Since $E_{n}=0$ for odd $n$ (the $\omega$ term and the $-\omega$ term cancel), the theorem is proved.
\end{proof}
This theorem allows us to say some interesting things about $\wp$. For example, we  obtain the following differential equation.
\begin{prop}
    We have
    \[(\wp')^2 = 4\wp^3 - 60 E_4 \wp - 140E_6.\]
\end{prop}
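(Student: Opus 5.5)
We will compare Laurent expansions at $z=0$ and then use the fact that an entire elliptic function is constant. From the Laurent expansion of $\wp$ in the previous theorem, we have
\[\wp(z) = z^{-2} + 3E_4 z^2 + 5E_6 z^4 + O(z^6),\]
and differentiating termwise gives
\[\wp'(z) = -2z^{-3} + 6E_4 z + 20E_6 z^3 + O(z^5).\]
The plan is to compute the principal part and constant term of $(\wp')^2$ and $\wp^3$. Then we show that the specific combination $(\wp')^2 - 4\wp^3 + 60E_4\wp + 140E_6$ has no pole at $0$ and vanishes there.

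The computation goes as follows. Squaring the expansion of $\wp'$, we get
\[(\wp')^2 = 4z^{-6} - 24E_4 z^{-2} - 80E_6 + O(z^2).\]
Cubing the expansion of $\wp$, we get
\[\wp^3 = z^{-6} + 9E_4 z^{-2} + 15E_6 + O(z^2).\]
Here the cross terms that survive up to order $z^0$ come only from $3(z^{-2})^2\cdot(3E_4z^2 + 5E_6z^4)$. Hence
\[(\wp')^2 - 4\wp^3 = -60E_4 z^{-2} - 140E_6 + O(z^2).\]
Since $\wp = z^{-2} + O(z^2)$ has zero constant term, adding $60E_4\wp$ cancels the $z^{-2}$ term without disturbing the constant term. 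This leaves
\[g(z) := (\wp')^2 - 4\wp^3 + 60E_4\wp = -140E_6 + O(z^2).\]

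Next, $g$ is elliptic, being a polynomial in the elliptic functions $\wp$ and $\wp'$. Its only possible poles are at lattice points, where $\wp$ and $\wp'$ have their poles. By periodicity, the behaviour at every lattice point is the same as at $0$, and we have just shown that $g$ is holomorphic at $0$. So $g$ is an entire elliptic function, and by the theorem that entire elliptic functions are constant, $g \equiv -140E_6$. Rearranging gives the stated identity.

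The main obstacle is purely bookkeeping: extracting the correct coefficients in the expansions of $(\wp')^2$ and $\wp^3$. In particular, we must make sure that no other products contribute to the $z^{-2}$ and $z^0$ terms. This works because only even powers appear and the next omitted term in $\wp$ is $O(z^6)$, which, multiplied by at most $z^{-4}$, contributes only at order $z^2$. We must also use the fact that $\wp$ has no constant term, so that the $60E_4\wp$ correction does not shift the constant.
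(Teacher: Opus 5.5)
Your proposal is correct and follows the paper's proof: you expand $\wp'$, $(\wp')^2$ and $\wp^3$ at $0$, and your coefficients ($-24E_4$, $-80E_6$, $9E_4$, $15E_6$) match the paper's. You then check that the combination has no principal part and conclude via periodicity and the theorem that entire elliptic functions are constant. The only difference is that you spell out two steps the paper leaves implicit: holomorphy at $0$ carries over to every lattice point by periodicity, and the constant value is $-140E_6$.
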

\begin{proof}
    Using the previous theorem, we first compute
    \begin{align*}
            \wp ' (z) &= \frac{-2}{z^3} + 6E_4z + 20E_6z^3 + \cdots,\\
            (\wp ' (z))^2 &= \frac{4}{z^6} - \frac{24E_4}{z^2} -80E_6 + \cdots,\\
            \wp(z)^3 &= \frac{1}{z^6} + \frac{9E_4}{z^2} + 15E_6 + \cdots.
    \end{align*}
    Then simple algebra tells us that $(\wp')^2 -4\wp^3 +60 E_4 \wp +140E_6$ has no poles: indeed, the only pole could be at $0$, and we can manually see that this particular combination of Laurent series kills all the $z^{-k}$ terms. Since the value at $z = 0$ is $0$, it follows that the function is fact $0$, completing the proof.
\end{proof}
This particular result is very useful for understanding the Weierstrass $\wp$-function. For example, we can use it to derive a beautiful addition formula for $\wp$.

\begin{theorem}\label{thm: addition formula}
    We have
    \[\wp(w+z) = \frac{1}{4}\left(\frac{\wp'(z)-\wp'(w)}{\wp(z) - \wp(w)}\right)^2 - \wp(z) - \wp(w).\]
\end{theorem}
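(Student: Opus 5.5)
We will use the classical line-through-points argument, based on the differential equation $(\wp')^2 = 4\wp^3 - 60E_4\wp - 140E_6$ and \cref{prop: sum of roots}. Throughout we assume $z, w, z\pm w \notin \Lambda$, so that $\wp(z)\ne\wp(w)$ by \cref{prop: simple roots}. The identity for the remaining $w$ then follows by analytic continuation in $w$ for fixed $z$, since both sides are meromorphic in $w$.

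First, fix $z,w$ and choose constants $a,b$ such that the line $y = a x + b$ passes through the points $(\wp(z),\wp'(z))$ and $(\wp(w),\wp'(w))$. Explicitly,
\[a = \frac{\wp'(z)-\wp'(w)}{\wp(z)-\wp(w)}.\]
Consider the elliptic function $g(u) = \wp'(u) - a\wp(u) - b$. Its only pole in a fundamental parallelogram is a triple pole at $u=0$, so by \cref{thm: count zeros and poles} it has exactly three zeros, counted with multiplicity. Two of these are $u=z$ and $u=w$, which are distinct modulo $\Lambda$. By \cref{prop: sum of roots} the zeros sum to $3\cdot 0 \equiv 0 \pmod \Lambda$, so the third zero is $u \equiv -(z+w)$.

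Next, at each zero $u$ of $g$ we have $\wp'(u) = a\wp(u)+b$. Substituting this into the differential equation shows that $x=\wp(u)$ is a root of the cubic
\[4x^3 - (ax+b)^2 - 60E_4 x - 140E_6 .\]
The leading terms are $4x^3 - a^2x^2$, so by Vieta the three roots sum to $a^2/4$. The roots should be $\wp(z)$, $\wp(w)$ and $\wp(-z-w) = \wp(z+w)$, using evenness. Solving for $\wp(z+w)$ then gives exactly the stated formula.

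\textbf{Main obstacle.} The delicate step is the multiplicity bookkeeping: we must know that the three zeros of $g$ give exactly the three roots of the cubic, with matching multiplicities. This includes cases where some of $z$, $w$, $-(z+w)$ coincide modulo $\Lambda$, or where two distinct zeros share the same $\wp$-value. The latter would happen for $u$ and $-u$, but then $\wp'$ would differ in sign, and both points can lie on the line only when $\wp'(u)=0$.

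To sidestep this, we first prove the identity only on the open dense set where $z$, $w$ and $-(z+w)$ have three distinct $\wp$-values. On this set the three distinct $x$-values are roots of a cubic of degree $3$, so they are all of its roots, and Vieta applies directly. We then extend to all admissible $(z,w)$ by continuity, since both sides are meromorphic in $w$.
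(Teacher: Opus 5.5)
Your proposal is correct and follows essentially the same route as the paper, which first proves as a lemma that $(\wp(z),\wp'(z))$, $(\wp(w),\wp'(w))$ and $(\wp(z+w),-\wp'(z+w))$ are collinear. It does so by locating the third zero of $\wp'-A\wp-B$ with the sum-of-zeros proposition, and then it substitutes the line into the differential equation and applies Vieta. Your extra step of restricting to generic $(z,w)$, where the three $\wp$-values are distinct, and then extending by meromorphy in $w$ supplies the multiplicity bookkeeping that the paper leaves implicit.
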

The proof we are about to present might seem extraordinarily clever, but we will explain the motivation after.

We first need the following result.
\begin{lemma}\label{lem: group law}
    The points
    \[P = (\wp (z), \wp'(z)), \quad Q = (\wp(w),\wp'(w)), \quad R = (\wp(z+w), - \wp'(z+w))\]
    are collinear $($in $\mathbb C^2)$.
\end{lemma}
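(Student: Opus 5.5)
We argue with the line through $P$ and $Q$, assuming first that $z,w,z+w\notin\Lambda$ and $\wp(z)\neq\wp(w)$; the degenerate cases follow by continuity, or are excluded since the statement only makes sense generically. Fix such $w$ and $z$. Choose constants $A,B$ so that the line $y=Ax+B$ passes through $P$ and $Q$, namely
\[A=\frac{\wp'(z)-\wp'(w)}{\wp(z)-\wp(w)},\qquad B=\wp'(z)-A\wp(z).\]
Now consider the elliptic function
\[g(u)=\wp'(u)-A\wp(u)-B.\]
Its only pole in a fundamental parallelogram is at $u=0$. Since $\wp'$ has a pole of order $3$ there and $\wp$ only of order $2$, this pole has order exactly $3$. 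By \cref{thm: count zeros and poles}, $g$ has exactly three zeros counted with multiplicity.

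By construction $u=z$ and $u=w$ are zeros of $g$. These are distinct modulo $\Lambda$, because $\wp(z)\ne\wp(w)$. Let $u_3$ be the third zero. By \cref{prop: sum of roots} applied to $g$, whose only pole is $0$ of multiplicity $-3$, we get $z+w+u_3-3\cdot 0\in\Lambda$. Hence $u_3\equiv -(z+w)\pmod\Lambda$.

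Evaluating $g(-(z+w))=0$, and using that $\wp$ is even and $\wp'$ is odd, gives
\[-\wp'(z+w)=A\wp(z+w)+B.\]
So $R=(\wp(z+w),-\wp'(z+w))$ lies on the same line $y=Ax+B$ as $P$ and $Q$. This proves collinearity.

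The main subtlety is the bookkeeping with \cref{prop: sum of roots}. The zeros must be taken as representatives inside the parallelogram, and one of $z,w,u_3$ might coincide with another. The proposition handles multiplicities, so $u_3$ is still determined modulo $\Lambda$ even when it coincides with $z$ or $w$; in that case $g$ has a double zero there, and the conclusion is unchanged.

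If one insists on covering $\wp(z)=\wp(w)$, there are two sub-cases. When $w\equiv -z$, the point $z+w$ lies in $\Lambda$ and $R$ is undefined. When $w\equiv z$, the chord is replaced by the tangent line, with $A=\wp''(z)/\wp'(z)$. Then $z$ is a double zero of $g$, and the same argument applies verbatim. Alternatively, this case follows by letting $w\to z$.
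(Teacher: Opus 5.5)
Your proof is correct and is essentially the paper's argument: take the line $y=Ax+B$ through $P$ and $Q$, observe that $\wp'-A\wp-B$ has a single triple pole at $0$, use \cref{prop: sum of roots} to identify the third zero as $-(z+w)$ modulo $\Lambda$, and finish using that $\wp$ is even and $\wp'$ is odd. Your handling of the degenerate cases goes beyond what the paper writes, and you also have the correct sign $-\wp'(z+w)$ in the last step, where the paper has a typo; note that when $w\equiv z$ collinearity is trivial because $P=Q$, so the tangent-line discussion is not needed for this statement.
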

\begin{proof}
    Choose constants $A$ and $B$ such that
    \[\wp'(z) = A\wp(z) + B \quad \text{and}\quad \wp'(w)  = A\wp(w) +B.\]
    In other words, $y = Ax + B$ is the line through the points $P$ and $Q$. Now consider the elliptic function
    \[\wp'(\zeta )-A\wp(\zeta) - B.\]
    This has a pole of order $3$ at $0$, by our analysis of the pole structure of $\wp$. Hence, it must have three zeros; which by \cref{prop: sum of roots} must some to some element of $\Lambda$. We know that $z$ and $w$ are two of the roots, so up to congruence $-z-w$ is the last one. Hence, $(\wp(-z-w) ,\wp'(-z-w))= (\wp(z+w),-\wp(z+w))=R$ is on the line $y=Ax + B$, so we conclude. 
\end{proof}

\begin{proof}[Proof of \cref{thm: addition formula}]
    The three points $P$, $Q$, $R$ are collinear points lying on the curve $y^2 =4x^3 - 60E_4 x - 140E_6$, by the differential equation
    \[(\wp')^2 = 4\wp^3 - 60E_4 \wp - 140 E_6.\]
    The slope of the line through $P$, $Q$, and $R$ is 
    \[m = \frac{\wp'(z)-\wp'(w)}{\wp(z) - \wp(w)}.\]
    Hence, the line may be written as $y = mx + c$, for some constant $c$. It follows that all three values $x = \wp(z)$, $\wp(w)$, and $\wp(z+w)$ are solutions to the equation
    \[(mx+c)^2 = 4x^3 - 60E_4 x - 140E_6,\]
    which rearranges to
    \[4x^3 -m^2x^2 -\text{lower degree terms} = 0.\]
    This is a cubic polynomial in $x$, and hence Vieta's formulas imply that the sum of the roots equals zero:
    \[\wp(z) + \wp(w) + \wp(z+w) = \frac{m^2}{4}.\qedhere\]
\end{proof}

As we see above, the points $(\wp(z), \wp'(z))$ lie on the cubic curve \[y^2 = 4x^3 - 60E_4 x -140E_6.\] This curve is an \textit{elliptic curve}, and \cref{lem: group law} proves an extremely important property about this parametrization: \textit{the map} \[(\wp(z),\wp'(z))\mapsto z\] \textit{is compatible with the group law on this elliptic curve}.  In other words, we have that \[(\wp(z),\wp'(z)) + (\wp(w),\wp'(w')) = (\wp(z+w),\wp'(z'+w')),\]
where the addition on the left hand side denotes elliptic curve addition. As the lattice varies, this actually parametrized \textit{all} elliptic curves over $\mathbb C$. This view explains the above proof: \cref{thm: addition formula} is nothing but a consequence of this relation between $\wp$ and elliptic curves.

The Eisenstein series appearing in the previous two results are important in their own right, and we devote the remainder of the section to studying their properties.

% We conclude with some interesting properties of Eisenstein series.
\begin{prop}
    Every Eisenstein series $E_k$ appearing in the Laurent expansion of $\wp(z)$ can be expressed as a rational function in $E_4$ and $E_6$.
\end{prop}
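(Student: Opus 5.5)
The plan is to derive a recursion for the Laurent coefficients of $\wp$ from a second-order differential equation. Write
\[\wp(z) = \frac{1}{z^2} + \sum_{n\ge 1} c_n z^{2n}, \qquad c_n = (2n+1)E_{2n+2}.\]
It is enough to show that every $c_n$ is a polynomial (with rational coefficients) in $c_1 = 3E_4$ and $c_2 = 5E_6$. Dividing by $2n+1$ then gives the claim for $E_{2n+2}$, and in fact shows it is a polynomial, not just a rational function, in $E_4$ and $E_6$. The odd $E_k$ vanish, as noted in the proof of the Laurent expansion, so they need no treatment.

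The first step is to differentiate the differential equation
\[(\wp')^2 = 4\wp^3 - 60E_4\wp - 140E_6.\]
This gives $2\wp'\wp'' = 12\wp^2\wp' - 60E_4\wp'$. Since $\wp'$ is not identically zero, we may cancel it and obtain
\[\wp'' = 6\wp^2 - 30E_4.\]
This equation is the more convenient one to work with, because it is only quadratic in $\wp$.

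Next, compare the coefficients of $z^{2n-2}$ on both sides for $n \ge 3$.

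On the left, $\wp''$ contributes $2n(2n-1)c_n$.

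On the right, write $\wp = z^{-2} + g$ with $g = \sum_{n\ge 1} c_n z^{2n}$. Then
\[6\wp^2 = 6z^{-4} + 12z^{-2}g + 6g^2.\]
The term $12z^{-2}g$ contributes $12c_n$ to the $z^{2n-2}$ coefficient. The term $6g^2$ contributes $6\sum_{i+j=n-1,\, i,j\ge1} c_ic_j$. The constant $-30E_4$ only affects $z^0$, which is the coefficient with $n = 1$, so it plays no role for $n \ge 3$. This yields
\[\bigl(2n(2n-1) - 12\bigr)c_n = 6\sum_{\substack{i+j=n-1\\ i,j\ge 1}} c_i c_j.\]
The factor $2n(2n-1) - 12 = (2n+3)(2n-4)$ is nonzero for $n \ge 3$, so we can solve for $c_n$. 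The right-hand side involves only $c_i$ with $i \le n-2$. Strong induction on $n$, with base cases $c_1 = 3E_4$ and $c_2 = 5E_6$, then finishes the proof. As a sanity check, the formula gives $c_3 = \tfrac{1}{3}c_1^2$, i.e.\ $7E_8 = 3E_4^2$.

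The main obstacle is bookkeeping rather than any conceptual difficulty. One has to get the index ranges in the convolution right and verify that the leading coefficient $(2n+3)(2n-4)$ never vanishes in the range used. That coefficient is exactly zero at $n = 2$, which is why $E_6$ cannot be produced from $E_4$ and must appear as an independent generator. One could instead expand $(\wp')^2 = 4\wp^3 - \cdots$ directly, but that leads to cubic convolutions and messier coefficients. The differentiated form avoids this, so I would use it.
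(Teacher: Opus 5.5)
Your proof is correct and is essentially the paper's argument: compare Laurent coefficients in the differential equation and solve for the newest Eisenstein series using a nonvanishing leading factor. The only difference is that you first differentiate to $\wp''=6\wp^2-30E_4$, which gives a cleaner quadratic recursion (your check $7E_8=3E_4^2$ is right), whereas the paper expands $(\wp')^2=4\wp^3-60E_4\wp-140E_6$ directly at the $z^{2k-4}$ coefficient, where its $(\wp')^2$ contribution should be $-8k(2k+1)E_{2k+2}$ rather than $-4k(2k+1)E_{2k+2}$, though the leading factor is nonzero either way.
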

\begin{proof}
We use the differential equation
\[(\wp')^2 = 4\wp^3 - 60E_4 \wp - 140 E_6.\tag{$*$}\]
We show that $E_{2k+2}$ can be expressed as a polynomial in $E_4,E_6,\dots, E_{2k}$ for $k\ge 3$, which is enough to imply the result.

To do this, expand $(*)$ as a Laurent series in $z$ using Theorem 5.1. We consider the $z^{2k-4}$ term in this expansion. The only terms contributing a $E_{2k+2}$ coefficient are $(\wp')^2$ and $4\wp^3$. The coefficient of the first is $-4k(2k+1)E_{2k+2}$, and that of the second is $12(2k+1)E_{2k+2}$. The remaining coefficients are some polynomial in $E_4,E_6,\dots, E_{2k}$. Hence, \[(12+4k)(2k+1)E_{2k+2}\] is a polynomial in $E_4,E_6,\dots, E_{2k}$, which implies the result.
\end{proof}
We obtain the following cute corollary.
\begin{cor}
    The function $\wp(z)$ is real for all $z\in \mathbb R$ (excluding the poles) if and only if both $E_4$ and $E_6$ are real.
\end{cor}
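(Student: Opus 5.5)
The plan is to prove both directions through the Laurent expansion
\[\wp(z) = \frac{1}{z^2}+\sum_{k=1}^\infty(2k+1)E_{2k+2}z^{2k},\]
together with the previous proposition. That proposition, read through its proof, says more than it states: $E_{2k+2}$ is a polynomial in $E_4,\dots,E_{2k}$ with \emph{rational} coefficients, since we divide by the nonzero integer $(12+4k)(2k+1)$. By induction on $k$, every $E_{2k+2}$ is therefore a polynomial in $E_4$ and $E_6$ with rational coefficients. I will state this strengthening first, because the corollary needs it.

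\emph{($\Leftarrow$)} Suppose $E_4,E_6\in\mathbb R$. By the strengthened proposition, every $E_{2k+2}$ is real, so every Laurent coefficient of $\wp$ at $0$ is real. Hence $\wp(z)$ is real for real $z$ in the punctured disk of convergence $0<|z|<r$, where $r=\min\{|\omega|:\omega\in\Lambda\setminus\{0\}\}$. To reach all real non-poles I will use analytic continuation: the function $g(z)=\overline{\wp(\bar z)}$ is meromorphic on $\mathbb C$ and agrees with $\wp$ on $(0,r)$. By the identity theorem, $g=\wp$ on all of $\mathbb C$ minus the poles. In particular, for real $z$ that are not poles, $\wp(z)=\overline{\wp(z)}$, so $\wp(z)$ is real.

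\emph{($\Rightarrow$)} Suppose $\wp$ is real on $\mathbb R$ away from its poles. Then $h(z)=\wp(z)-1/z^2$ is holomorphic near $0$ and real on a real interval around $0$, since the point $0$ is handled by continuity. All derivatives of $h$ at $0$, taken along the real axis, are then real. So the Taylor coefficients $3E_4$ and $5E_6$ are real, and hence so are $E_4$ and $E_6$.

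The only real subtlety is the analytic continuation step in ($\Leftarrow$): the series only gives reality near $0$, and the reflection/identity-theorem argument is what extends it to all of $\mathbb R$. The other point to handle carefully is extracting rational coefficients from the previous proposition, since "rational function in $E_4,E_6$" alone would not guarantee that the coefficients are real. I expect no further obstacles.
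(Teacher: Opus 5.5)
Your proposal is correct and follows the paper's proof. For $(\Leftarrow)$ you use real Laurent coefficients coming from the recursion for $E_{2k+2}$, and for $(\Rightarrow)$ you read off $3E_4$ and $5E_6$ from $\wp$ near $0$; the paper takes limits of $\wp(z)/z^2-1/z^4$ and $\wp(z)/z^4-3E_4/z^2-1/z^6$, whereas you take real derivatives of $\wp(z)-1/z^2$, which amounts to the same thing. Your two additions are refinements rather than a different route. The paper silently uses that the recursion has rational coefficients. It also passes from real Laurent coefficients directly to reality on all of $\mathbb R$, even though the series converges only for $0<|z|<r$; your reflection and identity-theorem step closes that gap. (The recursion's divisor is actually $(12+8k)(2k+1)$, not $(12+4k)(2k+1)$, but any nonzero integer suffices.)
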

\begin{proof}
Suppose that $E_4$ and $E_6$ are real. Then every coefficient of the Laurent series expansion of $\wp(z)$ is a polynomial combination of $E_4$ of $E_6$, so all the coefficients are real. It follows that $\wp(z)\in \mathbb R$ for all $z\in \mathbb R$. 

Conversely, if $\wp(z)$ is real for all real $z$, then \[\wp(z)/z^2 - 1/z^4 \] is real for all nonzero $z$. As $z\to 0$, this approaches $3E_4$ (from the Laurent series of $\wp$), so by continuity $E_4$ is real. Similarly, \[\wp(z)/z^4 - 3E_4/z^2 - 1/z^6\] is real for real $z\ne 0$. As $z\to 0$, this approaches $5E_6$, so by continuity $E_6$ is real as well.
\end{proof}
Now, we specialize to the case where $\omega_1 = 1$ and $\omega_2 = \tau$ is in the upper half-plane $\mathbb H$. This actually does not lose any generality, since any lattice may be scaled to these specifications. We can then consider the Eisenstein series as functions $E_k(\tau)$ of $\tau$. 

The previous two results suggest that $E_4$ and $E_6$ are the most ``important'' Eisenstein series, and indeed they have some very special connections. We define the \textit{modular invariants} $g_2(\tau) = 60E_4(\tau)$ and $g_3(\tau) = 140E_6(\tau)$ to be the coefficients appearing in the differential equation for $\wp(z)$. As mentioned above, it turns out that elliptic curve over $\mathbb C$ can be parametrized by complex numbers $\tau$ in the upper half-plane\footnote{If $\Lambda$ is the lattice generated by $1$ and $\tau$, then the elliptic curve corresponding to $\tau$ is isomorphic to $\mathbb C/\Lambda$ as groups.}. The connection is that the \textit{j-invariant} $j(\tau)$ of this elliptic curve can be expressed as
\[j(\tau) = 1728 \frac{g_2(\tau)^3}{g_2(\tau) - 27g_3(\tau)^2}.\]
All of these functions are what are known as \textit{modular functions}: they are symmetric with respect to the $SL_2(\mathbb Z)$-action on the upper half-plane. 

% \newpage

This symmetry manifests as a self-similarity structure in the complex plots of these functions:

\vskip 5pt
\begin{center}
        \begin{minipage}{0.4\textwidth}
            \begin{center}
                \includegraphics[width = 0.9\textwidth]{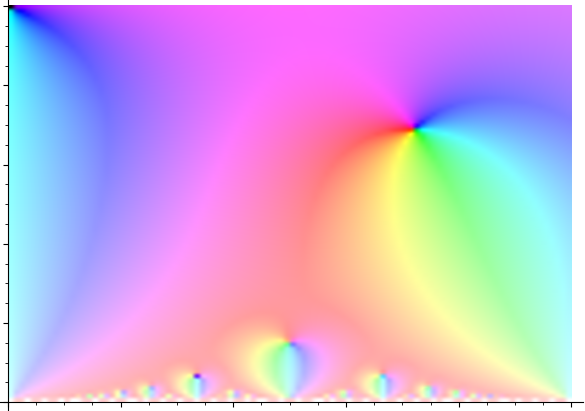}

                Plot of $g_2(\tau)$
            \end{center}
        \end{minipage}
        \begin{minipage}{0.4\textwidth}
            \begin{center}
                \includegraphics[width = 0.9\textwidth]{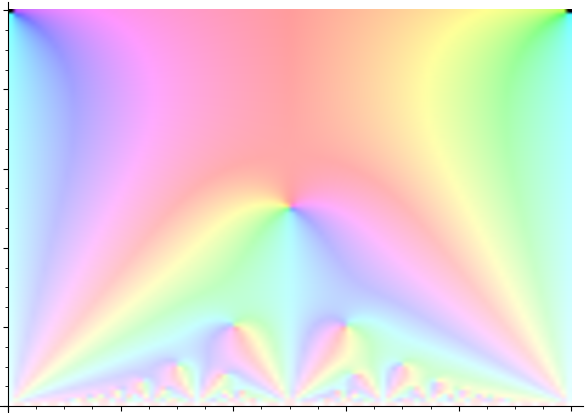}

                Plot of $g_3(\tau)$
            \end{center}
        \end{minipage}
\end{center}
\vskip 5pt
The following result proves that the Eisenstein series are modular forms.
\begin{prop}\label{prop: modular}
    Let $k\ge 4$ be even. The Eisenstein series $E_k(\tau)$ satisfy 
    \[E_k\left(\frac{a\tau+b}{c\tau + d}\right) =(c\tau + d)^k E_k(\tau)\]
    for all $\begin{pmatrix}a & b \\ c & d\end{pmatrix}\in \mathrm{SL}_2(\mathbb Z)$.
\end{prop}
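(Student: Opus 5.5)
The plan is to work directly with the lattice sum
\[E_k(\tau) = \sum_{(m,n)\in \mathbb Z^2\setminus\{(0,0)\}} \frac{1}{(m+n\tau)^k},\]
which is the definition of $E_k$ for the lattice $\Lambda_\tau$ generated by $1$ and $\tau$. For $k\ge 4$ (indeed $k\ge 3$) this double sum converges absolutely, by the same comparison with $\sum|\omega|^{-3}$ used to show that $\wp$ converges. Absolute convergence lets us reindex the sum freely, and that is the only analytic input needed.

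Fix $\gamma=\begin{pmatrix}a&b\\c&d\end{pmatrix}\in \mathrm{SL}_2(\mathbb Z)$ and put $\tau'=\frac{a\tau+b}{c\tau+d}$. We check that $\tau'\in\mathbb H$ via $\operatorname{Im}\tau'=\operatorname{Im}\tau/|c\tau+d|^2$. We also note that $c\tau+d\neq 0$, since $(c,d)\ne(0,0)$ and $\tau\notin\mathbb R$.

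The key computation comes next. For each term,
\[m+n\tau' = \frac{m(c\tau+d)+n(a\tau+b)}{c\tau+d} = \frac{(md+nb) + (mc+na)\tau}{c\tau+d}.\]
Hence
\[E_k(\tau') = (c\tau+d)^k\sum_{(m,n)\neq(0,0)} \frac{1}{\bigl((md+nb)+(mc+na)\tau\bigr)^k}.\]
Now the map $(m,n)\mapsto (m',n')=(md+nb,\ mc+na)$ is given by the integer matrix $\begin{pmatrix} d & b\\ c & a\end{pmatrix}$, whose determinant is $ad-bc=1$. It is therefore a bijection of $\mathbb Z^2$ fixing $(0,0)$, and so it permutes $\mathbb Z^2\setminus\{(0,0)\}$. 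Because the series converges absolutely, reordering the terms does not change the sum. The remaining sum is thus exactly $E_k(\tau)$, which gives the claimed identity.

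Conceptually, this is the scaling relation from \cref{ex: wp for lattices}(a) applied at the level of $E_k$. We have $E_k(a\Lambda)=a^{-k}E_k(\Lambda)$, and $\Lambda_{\tau'}=(c\tau+d)^{-1}\Lambda_\tau$, because $\{1,\tau'\}$ and $\{c\tau+d,\,a\tau+b\}$ span the same lattice up to that scalar, and a unimodular change of basis does not change a lattice. I could present the proof in this form instead: first show that $E_k$ depends only on the lattice, then apply homogeneity of degree $-k$.

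The main obstacle is the justification of the rearrangement step. It relies on absolute convergence, which requires $k\ge 3$. For $k=2$ the argument genuinely fails, which is why the statement assumes $k\ge 4$. The evenness of $k$ is not needed for the identity itself; it only ensures that $E_k$ is nonzero. I would state the convergence bound explicitly: $|m+n\tau|\ge C\max(|m|,|n|)$ for some $C>0$ depending on $\tau$, and $\sum_{(m,n)\neq 0}\max(|m|,|n|)^{-k}<\infty$ for $k>2$, since there are $8N$ lattice points with $\max(|m|,|n|)=N$.
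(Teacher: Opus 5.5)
Your proof is correct and is essentially the paper's argument: rewrite $m+n\tau'$ over the common denominator $c\tau+d$, pull the factor $(c\tau+d)^k$ out of the sum, and observe that $(m,n)\mapsto(md+nb,\,mc+na)$ is a unimodular, hence bijective, reindexing of $\mathbb Z^2\setminus\{(0,0)\}$. Your explicit appeal to absolute convergence (valid for $k\ge 3$) to justify the rearrangement is a detail the paper leaves implicit.
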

\begin{proof}
    We may write
    \begin{align*}
        E_k\left(\frac{a\tau+b}{c\tau + d}\right) &= \sum_{\substack{m,n\in \mathbb Z\\ (m,n)\ne (0,0)}} \frac{1}{\left(\frac{m(c\tau + d) + n(a\tau + b))}{c\tau + d}\right)^k}\\
        &=(c\tau + d)^k\sum_{\substack{m,n\in \mathbb Z\\ (m,n)\ne (0,0)}} \frac{1}{((mc+na)\tau + (md + nb))^{k}}.\\
    \end{align*}
    Since $\begin{pmatrix}
        a & b\\ c & d
    \end{pmatrix}\in \mathrm{SL}_2(\mathbb Z)$ is invertible over the integers, and
    \[\begin{pmatrix}
        mc+na \\
        md + nb
    \end{pmatrix} = \begin{pmatrix}
        n & m
    \end{pmatrix}\begin{pmatrix}
         a & b \\
         c & d
    \end{pmatrix},\]
    it follows that $(mc+na, md+nb)$ takes each value in $\mathbb Z^2$ for exactly one value in $(m,n)\in \mathbb Z^2$. Since $(0,0)$ maps to $(0,0)$, it follows that we may rewrite the expression as
    \[(c\tau + d)^k \sum_{\substack{m,n\in \mathbb Z\\(m,n)\ne (0,0)}} \frac{1}{(m\tau + n)^k}  = (c\tau + d)^k E_k(\tau).\qedhere\]
\end{proof}
The special linear group $\mathrm{SL}_2(\mathbb Z)$ acts on the upper half-plane $\mathbb H$ via \textit{linear fractional transformations} $z\mapsto \frac{az+ b}{cz+d}$. Hence, the above proposition tells that although not being completely invariant under the $\mathrm{SL}_2(\mathbb Z)$-action, it behaves very symmetrically. Precisely, this result means that $E_k$ \textit{is a modular function of weight $k$}. 

The modular transformation law of \cref{prop: modular} reveals that the Eisenstein series are far more than convenient coefficients appearing in the Laurent expansion of the Weierstrass $\wp$-function. They are among the simplest examples of modular forms, objects whose remarkable symmetries encode deep arithmetic information. What began as an analytic study of doubly periodic functions has therefore led naturally to the theory of elliptic curves and modularity. Through the Eisenstein series, the geometry of lattices, the analysis of elliptic functions, and the arithmetic of modular forms become manifestations of a single underlying structure.

\section*{Conclusion}

The theory of elliptic functions provides a striking example of how different areas of mathematics can illuminate one another. Beginning with the simple idea of double periodicity, we developed the basic structure theory of elliptic functions and constructed the Weierstrass $\wp$-function, which serves as a universal building block for the subject. Its Laurent expansion introduced the Eisenstein series, whose coefficients encode fundamental information about the underlying lattice.

The differential equation satisfied by $\wp$ revealed an unexpected connection with elliptic curves, while the modular transformation properties of the Eisenstein series connected the theory to modular forms and arithmetic geometry. These relationships show how analytic objects defined by infinite series can give rise to rich algebraic and geometric structures. Although the classical theory originated in the study of elliptic integrals, it continues to occupy a central place in modern mathematics, linking complex analysis, geometry, number theory, and representation theory through a common and remarkably elegant framework.

\end{document}